\newtheorem*{theorem*}{Theorem}
\begin{document}

\title{Recombination processes and non-linear Markov chains
}


\author{S.A.~Pirogov \and A.N.~Rybko \and A.S.~Kalinina \and M.S.~Gelfand
}


\institute{S.A.~Pirogov$^1$\and A.N.~Rybko$^1$ \and A.S.~Kalinina$^1$(\Letter) \and M.S.~Gelfand$^{1,2}$\at
              $^1$A.A. Kharkevich Institute for Information Transmission Problems, RAS, Moscow, Russia\\
              $^2$M.V. Lomonosov Moscow State University, Department of Bioengineering and Bioinformatics, Moscow, Russia\\
              \email{as.kalinina@gmail.com}           
}

\date{Received: date / Accepted: date}

\maketitle

\begin{abstract}
Bacteria are known to exchange genetic information by horizontal gene transfer. Since the frequency of homologous recombination  depends on the similarity between the recombining segments, several studies examined whether this could lead to the emergence of subspecies. Most of them simulated fixed-size Wright--Fisher populations, in which the genetic drift should be taken into account.
Here, we use non-linear Markov processes to describe a bacterial population evolving under mutation and recombination. We consider a population structure as a probability measure on the space of genomes. This approach implies the infinite population size limit, thus the genetic drift is not assumed. We prove that under these conditions the emergence of subspecies is impossible.
\keywords{entropy \and homologous recombination \and bacterial speciation}
\end{abstract}

\section{Introduction}
\label{intro}
Bacterial speciation differs from that in animals or plants, where the natural limitations on breeding exist, due to the lack of sexual reproduction in  prokaryotes. Nonetheless, bacteria are capable of obtaining genetic information from sources other then their maternal cells. 

Several species can acquire DNA directly from the environment. 
This process is called natural transformation. Approximately 1\% of bacterial species are known to have this ability, i.e. are competent (Jonas et al. 2001; Thomas and Nielsen 2005). Many of these species are not permanently competent, their ability to uptake DNA being induced by many factors such as stress and starvation.

Other mechanisms for horizontal DNA transfer are conjugation and transduction. Non-competent species, such as \emph{Escherichia coli}, acquire DNA from other bacteria via conjugative plasmids (conjugation) or phages (transduction) (Arutyunov and Frost 2013; Weinbauer and Rassoulzadegan 2004; Dixit et al. 2014).

Following uptake, DNA can be used by a cell as food or integrated in the genome by homologous recombination. As it has been demonstrated \emph{in vitro}, the probability of successful homologous recombination depends, firstly, on the similarity of the recombining segments, and, secondly, on their length (Shen and Huang 1986; Majewski and Cohan 1999; Vuli\'c et al. 1997).

Homologous recombination plays a major role in shaping bacterial species (Chan et al. 2011; Yahara et al. 2012). The process of homologous recombination is believed to be more intensive within bacterial species than between them due to higher similarity of genomes and common environment (Skippington and Ragan 2012). Thus, bacterial species should be homogeneous, but, in fact, they often form stable subspecies or phylogenetic groups (Guttman and Dykhuizen 1994; Chaudhuri and Henderson 2012), which may be considered as the early stage of the bacterial speciation.

The emergence of clusters of genomes as a result of  niche specialization, geographical isolation or selective pressure is possible (Polz et al. 2013; Koeppel et al. 2013; Cheng et al. 2015), but it is not obvious whether clusters may emerge in neutral models with only the mutation and homologous recombination processes.

Previous studies generated no consensus on the emergence of stable clusters of genomes in neutral models. Falush et al. (2006) have shown that stable isolated clusters emerge in the neutral model with appropriate values of the mutation rate to the recombination rate ratio and other parameters of simulation . More general simulations showed that the  emergence of clusters is likely in the absence or low rate of homologous recombination, where the clonal populations form clusters, whereas the high rate of homologous recombination acts like a cohesive force (Fraser et al. 2007). 

Furthermore, it has been analytically shown that distinct populations may be maintained by the mutation and homologous recombination processes without other factors (Doroghazi and Buckley 2011). However, in this study the distance between two populations was defined as the mean distance between all pairs of genomes, so if two similar populations with high variance formed one cluster, they still had non-zero distance between them.

An experimental study on dependence of recombination rate on sequence similarity \emph{in vivo} (Bao et al. 2014) demonstrated that if the recombination rate fell as sequence divergence increased, no clear-cut genomic boundaries between species emerged. On the other hand, such boundaries are observed (Tang et al. 2013), and the process of uptake exogenous DNA \emph{in vivo} differs significantly from that \emph{in vitro}. 
Understanding of the bacterial population behaviour in the neutral model entails understanding of bacterial subspecies isolation and reduction of homologous recombination between them (Ellegaard et al. 2013).

Here we consider the possibility of phylogroup emergence in the neutral model due to solely mutations and recombination. In (Lyubich 1971) this situation was considered for a diploid population, and for this model convergence to equilibrium was proved, but dependence of recombination rate of sequence similarity was not considered. This property of homologous recombination is essential in all studies on bacterial speciation in the neutral model.
Finally, properties of some models of recombination process were studied in (Baake 2011a, Baake 2011b).

Here we define a bacterial population as a set of genomes that continuously exchange genetic information via homologous recombination. For simplicity, we assume that the genomes can be aligned throughout their entire length, so that coordinates in a genome completely define the homologous region in another genome. Below, after giving formal definitions, we write a differential equation that describes a population under mutation and recombination processes in terms of probability measures on the space of genomes, and examine its fixed points. The equation describes the behavior of a population in the infinite size limit. For the finite size there is no closed system of equations for the average fractions of different genomes in the population. Our main tool, the monotonicity of the entropy, was used in other situations in (Kun and Lyubich 1980) and by L.Boltzmann in statistical physics.
Non-trivial behavior of the equation solutions would correspond to a complex population structure that hypothetically could emerge in this model.
 
\section{Results}
\label{sec:1}
Let $K$ be a finite alphabet (a set of nucleotides) and let a genome $x$ be a word of length $n$  over it. We consider two transformations of a genome:

1) mutations, when one letter changes to another $x_i\rightarrow y_i$, $i \in \Lambda=\{1,\ldots,n\}$, the mutation matrix is supposed to be irreducible, i.e. it is possible to get any letter from any other by several mutations;

2) homologous recombination, when a substring $x_I$ changes with a certain probability to substring $y_I$ with the same coordinates from another genome $y$. Here $I$ is any subset of $\Lambda$, $I \subset  \Lambda$ (hence, this definition is more general than in biology, where $I$ should be an interval in $\Lambda$).

The fundamental difference between these two transformations is that mutations occur in a genome independently of other genomes.
Formally, for any position $i$ in a genome, for any nucleotides $a,b\in K, a\neq b$, there exists a probability of transition  $a\rightarrow b$, denoted by $\alpha_i(a,b)$.
This means that for a small period of time $dt$ the probability of mutation of nucleotide $a$ to nucleotide $b$ approximately equals $\alpha_i(a,b)dt$.

Homologous recombination results from interaction of genomes in the space of genomes $X$. 
The recombination probability depends on the distribution of genomes in the space $X$ and a function $\varphi(x_I,y_I)$ which defines similarity between genomes $x$ and $y$ on substring $I$. 
This function is symmetric and non-negative. The distribution of genomes in $X$ is characterized by the probability distribution $\mu_\Lambda(x)$. 
Thus, the probability $P_\mu^{(I)}(x\rightarrow y)$ of substitution of a substring $x_I$ in genome $x$ to substring $y_I$ from genome $y$ equals $\varkappa\varphi(x_I,y_I)\mu_I(y_I)dt$ up to terms of order $(dt)^2$, $\varkappa$ is a constant and $\mu_I(y_I)$ is the marginal distribution, i.e. the probability distribution of substring  $y_I$.

Importantly, the probability of recombination on a substring $I$ in a genome depends on the probability distribution of all genomes in $X$. Such processes are called continuous-time non-linear Markov processes in the sense of McKean (1996) (i.e. Markov processes whose generator depends on a measure). The dependence of the probability distribution $\mu(x)$ on time is described by a non-linear differential equation
\begin{equation}\label{equ}
\begin{split}
\frac{d\mu_\Lambda(x_\Lambda)}{dt} = \sum_i\sum_{y_i}\left(
\alpha_i(y_i,x_i)\mu_\Lambda(x_{\Lambda\setminus i},y_i)-
\alpha_i(x_i,y_i)\mu_\Lambda(x_\Lambda)\right)+\\
\varkappa\sum_I\sum_{y_I}\left(
\varphi(y_I,x_I)\mu_I(x_I)\mu_\Lambda(x_{\Lambda\setminus I},y_I)-
\varphi(x_I,y_I)\mu_I(y_I)\mu_\Lambda(x_\Lambda)\right),
\end{split}
\end{equation}
(unlike the linear Kolmogorov forward equation for usual Markov processes).
The right-hand side of this equation is the sum of the following terms:

1) linear terms for mutations;

2) non-linear terms for substrings $I\subset\Lambda$, where recombination is possible.

As proved by Kurtz (Kurtz 1970; Ethier and Kurtz 1986), this equation is exact in the infinite size population limit.

Here we prove that if only mutation and recombination processes are considered, and the similarity function $\varphi(x_I,y_I)$ is symmetric, then for all values of other parameters, such as the ratio of the intensity of mutations and recombinations, or an initial distribution of genomes, there is a unique fixed point. This fixed point, as we show below, is the stationary distribution $q_\Lambda$ for the pure mutation process (the process without recombinations).

\begin{theorem*}
Equation (\ref{equ}) has a unique fixed point $q_\Lambda$ and all trajectories of (\ref{equ}) $\mu_\Lambda(t)\rightarrow q_\Lambda$ as $t\rightarrow \infty$.
\end{theorem*}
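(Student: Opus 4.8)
The plan is to show that $q_\Lambda=\prod_i q_i$ — the product of the stationary laws $q_i$ of the independent single-site mutation chains — is a globally attracting fixed point, using the relative entropy $D(\mu_\Lambda\,\|\,q_\Lambda)=\sum_x\mu_\Lambda(x)\log\frac{\mu_\Lambda(x)}{q_\Lambda(x)}$ as a Lyapunov ($H$-)functional in the spirit of Boltzmann's theorem. Two structural facts come first. Because $\varphi$ is symmetric, the recombination terms conserve every marginal: summing the term for a fixed $I$ over the coordinates outside $I$ produces a gain $\sum_{y_I}\varphi(y_I,x_I)\mu_I(x_I)\mu_I(y_I)$ and a loss $\sum_{y_I}\varphi(x_I,y_I)\mu_I(y_I)\mu_I(x_I)$ that coincide and cancel; hence each single-site marginal $\mu_i(t)$ obeys the closed linear Kolmogorov equation of its site chain and, by irreducibility, $\mu_i(t)\to q_i$. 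The same symmetry makes $q_\Lambda$ a fixed point, the recombination term for $I$ collapsing to $q_I(x_I)\,q_{\Lambda\setminus I}(x_{\Lambda\setminus I})\sum_{y_I}[\varphi(y_I,x_I)-\varphi(x_I,y_I)]q_I(y_I)=0$, while the mutation part vanishes site by site.

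Next I would differentiate: $\frac{d}{dt}D=\sum_x\dot\mu_\Lambda(x)\log\frac{\mu_\Lambda(x)}{q_\Lambda(x)}$, the additive constant dropping because $\sum_x\dot\mu_\Lambda=0$, and split $\dot\mu_\Lambda$ into its mutation and recombination parts. For the mutation part I would invoke the classical fact that relative entropy to the stationary law is non-increasing along a Markov semigroup: with $u_x=\mu_\Lambda(x)/q_\Lambda(x)$, the bound $\log\frac{u_x}{u_y}\le\frac{u_x-u_y}{u_y}$ together with the stationarity $q_\Lambda^{\mathsf T}L^{\mathrm{mut}}=0$ of the mutation generator $L^{\mathrm{mut}}$ gives a non-positive production; since the mutation chain on $K^n$ is irreducible, equality forces $u$ to be constant, i.e. $\mu_\Lambda=q_\Lambda$.

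The crux — and the step where one is tempted to symmetrize too little — is the recombination contribution. Expanding the marginal as $\mu_I(x_I)=\sum_w\mu_\Lambda(w,x_I)$ turns the term for $I$, call it $\dot\mu^{(I)}$, into a genuinely quadratic collision kernel in which two genomes with complements $z,w$ and $I$-substrings $a,b$ interact. The $q_\Lambda$-dependent part of the logarithm integrates to zero (again by marginal conservation), leaving $\sum_x\dot\mu^{(I)}(x)\log\mu_\Lambda(x)$, which I would symmetrize over the order-four group generated by $a\leftrightarrow b$ and $z\leftrightarrow w$, using $\varphi(a,b)=\varphi(b,a)$. The sum then collapses to $-\frac14\sum\varphi(a,b)(P-Q)\log\frac{P}{Q}$ with $P=\mu_\Lambda(z,a)\mu_\Lambda(w,b)$ and $Q=\mu_\Lambda(z,b)\mu_\Lambda(w,a)$, which is $\le 0$ since $\varphi\ge 0$ and $(P-Q)\log\frac{P}{Q}\ge 0$. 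This is the \emph{main obstacle}: the double symmetrization, over the exchanged substrings and over the partner genome's complement, is exactly what makes the sign come out right, and symmetrizing only in $a\leftrightarrow b$ leaves an indefinite expression.

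Finally I would assemble the pieces. Both contributions being non-positive, $D(\mu_\Lambda(t)\,\|\,q_\Lambda)$ is non-increasing, and since $q_\Lambda$ has full support $D$ is finite and continuous on the compact simplex of measures on $K^n$. On the set $\{\frac{d}{dt}D=0\}$ the mutation production must vanish, which by the irreducibility argument occurs only at $q_\Lambda$; thus every fixed point equals $q_\Lambda$ (uniqueness), and LaSalle's invariance principle yields $\mu_\Lambda(t)\to q_\Lambda$ along every trajectory. Everything outside the recombination estimate is either structural bookkeeping or a standard Markov-chain entropy computation.
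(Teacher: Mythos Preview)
Your argument is correct and shares the overall skeleton with the paper: both use $D(\mu_\Lambda\,\|\,q_\Lambda)$ as a Lyapunov function, both show that marginals are conserved by recombination so that $\sum_x\mu_\Lambda(x)\log q_\Lambda(x)$ is unaffected by the recombination terms, and both handle the mutation part by the standard strict $H$-theorem for an irreducible linear chain, concluding via a LaSalle-type argument. The genuine difference is in the recombination entropy production. The paper does \emph{not} expand $\mu_I$ and symmetrize over the Klein four-group; instead it freezes $\mu$ and observes that, for each $I$, the matrix $P_\mu^{(I)}$ in~(\ref{P}) is an ordinary stochastic matrix whose invariant measure is the product $\hat\mu_\Lambda=\mu_I\otimes\mu_{\Lambda\setminus I}$, and then applies Yosida's inequality (Lemma~1) that $D(\cdot\,\|\,\hat\mu_\Lambda)$ does not increase under $P_\mu^{(I)}$; since $\log\hat\mu_\Lambda$ splits over $I$ and $\Lambda\setminus I$ and those marginals are preserved, this reduces to monotonicity of the absolute entropy $H(\mu_\Lambda)$. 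Your route is the explicit Boltzmann collision computation (essentially the $I/I$-recombination viewpoint the paper sketches only in its Appendix): it requires the extra step of introducing the partner's complement $w$ and the order-four symmetrization you rightly flag as the crux, but in return it produces the explicit production term $-\tfrac14\sum\varphi(a,b)(P-Q)\log(P/Q)$ and hence an explicit description of the zero-production set, whereas the paper's linear-Markov argument is shorter and computation-free but gives no such formula.
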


\noindent\textbf{Note} From the convergence of trajectories it follows that for a population consisting of $N$ individual bacteria (in the stationary state), the fraction $f_N(x)$ of bacteria having   genome $x$ converges in probability to $q_\Lambda(x)$ when $N$ tends to infinity, see (Liggett 2005, Chapter 1). 
It follows also that $Ef_N(x)\rightarrow q_\Lambda(x)$ when 
$N \rightarrow \infty$.

We have no detailed information about the dependencies between genomes for finite $N$. However, in the limit of infinite population size, genomes sampled from the population are independent. The asymptotic independence also follows from the convergence of fractions $f_N(x)$ (Pirogov and Petrova 2014).\\

To prove the Theorem, we use the Lyapunov method. The Lyapunov function is the Kullback-Leibler divergence (relative entropy) of $\mu_\Lambda$ with respect to $q_\Lambda$. 

Consider the mutation and recombination processes separately. As mentioned above, if $dt$ is small, the recombination process on the substring $I$ can be described as a non-linear discrete time Markov chain on the space $X$ with transition probabilities 
\begin{equation}\label{P}
P^{(I)}_\mu(x\rightarrow y)=\varkappa\delta(x_{\Lambda\setminus I},y_{\Lambda\setminus I})\varphi(x_I,y_I)\mu_I(y_I)dt
\end{equation}
for $y\neq x$, and $P^{(I)}_\mu(x\rightarrow x) = 1 - \sum_{y\neq x}P^{(I)}_\mu(x\rightarrow y)$. Here $\delta$ is the Kronecker delta.

Obviously, for this Markov chain the probability distribution
$$
\hat\mu_\Lambda(x_\Lambda)=\mu_{\Lambda\setminus I}(x_{\Lambda\setminus I})\mu_I(x_I)
$$
 is an invariant measure 
(here it is important, that the similarity function $\varphi(x_I,y_I) = \varphi(y_I,x_I)$ is symmetric). 
Moreover, any measure $\nu_\Lambda(x)$ on the space $X$ with marginal distributions $\mu_{\Lambda\setminus I}(x_{\Lambda\setminus I})$ and $\mu_I(x_I)$ turns to a measure $\nu_\Lambda P_\mu^{(I)}(x)=\sum_{y\in X}\nu_\Lambda(y)P^{(I)}_\mu(y\rightarrow x)$ having the same marginal distributions. 
So for the given measure $\mu_\Lambda$, the matrix $P_\mu^{(I)}$ is the transition matrix of the usual (linear) Markov chain with the invariant measure $\hat{\mu}_\Lambda$.

We now use an inequality for finite Markov chains, although it is more general in (Yosida 1940; Yosida 1965).

\begin{lemma}
Let $P$ be a stochastic matrix, i.e. matrix $P_{xy}$ such that
$P_{xy}\geqslant 0$ and $\sum_y P_{xy} = 1$, and let $\hat{\mu}$
be an invariant probability measure, $\hat\mu = \hat\mu P$. Suppose $\hat\mu(x)>0$ for any $x$. Then, for any probability measure $\mu$, 
\begin{equation}\label{I0}
\sum_x\left(\ln \frac{\left(\mu P\right)(x)}{\hat\mu (x)}\right)(\mu P)(x)\leqslant\sum_x\left(\ln\frac{\mu(x)}{\hat\mu(x)}{}\right)\mu(x)
\end{equation}
\end{lemma}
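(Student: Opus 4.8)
The quantity on the right of (\ref{I0}) is the Kullback--Leibler divergence $D(\mu\,\|\,\hat\mu)=\sum_x \mu(x)\ln\bigl(\mu(x)/\hat\mu(x)\bigr)$, and the inequality asserts that this divergence does not increase when $\mu$ is pushed forward by $P$, provided $\hat\mu$ is $P$-invariant. The plan is to prove it by convexity. Writing $f(u)=u\ln u$, both sides have the form $\sum_x \hat\mu(x)\,f(\cdot)$, so the statement will follow from Jensen's inequality once the ratio $(\mu P)(x)/\hat\mu(x)$ is exhibited as a genuine convex combination of the ratios $\mu(y)/\hat\mu(y)$.

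To produce that averaging, I would introduce the time-reversed kernel
$$Q_{xy}=\frac{\hat\mu(y)\,P_{yx}}{\hat\mu(x)},$$
which is well defined because $\hat\mu(x)>0$ for every $x$. It is stochastic: $\sum_y Q_{xy}=\hat\mu(x)^{-1}\sum_y\hat\mu(y)P_{yx}=\hat\mu(x)^{-1}(\hat\mu P)(x)=1$, where the invariance $\hat\mu=\hat\mu P$ is used precisely here. Setting $\psi(y)=\mu(y)/\hat\mu(y)$, a direct computation gives
$$\frac{(\mu P)(x)}{\hat\mu(x)}=\frac{1}{\hat\mu(x)}\sum_y \mu(y)\,P_{yx}=\sum_y Q_{xy}\,\psi(y),$$
so the left-hand ratio is exactly the $Q_{x\cdot}$-average of $\psi$.

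Since $f(u)=u\ln u$ is convex, Jensen's inequality with the probability weights $Q_{x\cdot}$ yields $f\bigl(\sum_y Q_{xy}\psi(y)\bigr)\le\sum_y Q_{xy}f(\psi(y))$ for each fixed $x$. Multiplying by $\hat\mu(x)$ and summing over $x$, the left side becomes exactly the left-hand side of (\ref{I0}); on the right I would interchange the two sums and use the dual invariance $\sum_x \hat\mu(x)Q_{xy}=\hat\mu(y)\sum_x P_{yx}=\hat\mu(y)$ to collapse the $x$-sum, recovering $\sum_y \hat\mu(y)f(\psi(y))=\sum_y \mu(y)\ln\bigl(\mu(y)/\hat\mu(y)\bigr)$, the right-hand side of (\ref{I0}). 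The main obstacle is essentially bookkeeping: one must check that $Q$ is honestly stochastic (this is where invariance enters) and keep the direction of Jensen's inequality straight, since it is convexity, not concavity, that sends the averaged argument to the smaller value. The boundary convention $0\ln 0=0$ disposes of any vanishing $\mu(x)$ without further comment.
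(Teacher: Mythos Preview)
Your argument is correct: introducing the time-reversed kernel $Q_{xy}=\hat\mu(y)P_{yx}/\hat\mu(x)$, checking that invariance of $\hat\mu$ makes $Q$ stochastic, expressing $(\mu P)(x)/\hat\mu(x)$ as the $Q_{x\cdot}$-average of $\psi(y)=\mu(y)/\hat\mu(y)$, and then applying Jensen's inequality for $f(u)=u\ln u$ is the standard route to this inequality, and your bookkeeping with the dual relation $\sum_x\hat\mu(x)Q_{xy}=\hat\mu(y)$ is clean.

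As for comparison with the paper: the paper does not actually prove this lemma. It quotes the inequality as a known fact about finite Markov chains and refers the reader to Yosida (1940, 1965) for the general statement. So there is no in-paper argument to compare against; your proof supplies what the paper only cites.
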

(Here as always $0 \ln 0 = 0$).

In our case, $\hat\mu_\Lambda(x_\Lambda) = \mu_{\Lambda\setminus I}(x_{\Lambda\setminus I})\mu_I(x_I)$, so $\ln \hat\mu=\ln \mu_I(x_I)+\ln \mu_{\Lambda\setminus I}(x_{\Lambda\setminus I})$  is a sum of functions depending only on $x_I$ and $x_{\Lambda\setminus I}$, respectively. Since $P^{(I)}_\mu$, acting on the measure $\mu_\Lambda$, retains marginal distributions of $x_I$ and $x_{\Lambda\setminus I}$, it follows that
\begin{equation}\label{E0}
\sum_x\left(\ln \hat\mu_\Lambda(x)\right)(\mu_\Lambda P_\mu^{(I)})(x)=\sum_x\left(\ln \hat\mu_\Lambda(x)\right)\mu_\Lambda(x)
\end{equation}

Finally, the Lemma yields the entropic inequality
\begin{equation}\label{E}
\sum_x\left(\ln\left(\mu_\Lambda P_\mu^{(I)}\right)(x)\right)\left(\mu_\Lambda P_\mu^{(I)}\right)(x) \leqslant
\sum_x\left(\ln \mu_\Lambda(x)\right)\mu_\Lambda(x)
\end{equation}

Now consider mutations. It is supposed that transition intensities 
$\alpha_i(a,b)$ define a connected continuous-time Markov chain
on alphabet $K$, so it is possible to pass from any $a$ to any $b$ in several steps.
By definition, $\alpha_i(a,a)=-\sum_{b\neq a}\alpha_i(a,b)$. 
Matrix $A_i=\left(\alpha_i(a,b),a,b\in K\right)$ is called the infinitesimal matrix of a time-continuous Markov chain. 
It is well known that for such chain there exists a unique invariant distribution $q_i(a), a \in K$ and $q_i(a)>0$. 
In terms of matrix $A_i$ this means that 
$q_iA_i=0$ (by definition $\left(q_iA_i\right)(x)=\sum_y q_i(y)\alpha_i(y,x)$). 

To describe mutations in any arbitrary position in the genome consider the following continuous-time Markov chain. Let $A_\Lambda = (a_\Lambda(x,y),x,y\in X)$ be the infinitesimal matrix, $a_\Lambda(x,y) = \sum_i\delta\left(x_{\Lambda\setminus i},y_{\Lambda\setminus i}\right)\alpha_i(x_i,y_i)$. The invariant distribution of the chain, defined by matrix $A_\Lambda$, is 

$$
q_\Lambda(x_\Lambda)=\prod_i q_i(x_i)
$$

Obviously, this chain is connected on the space $X$.

Finally, we use a general statement about the entropy monotonicity that is well known from the folklore and from results of (Batishcheva and Vedenyapin 2005) as a special case.

\begin{lemma}
Let $\alpha_{xy}$ be the transition intensities of a connected finite continuous time Markov chain and let $q_x$ be its stationary distribution. Then the relative entropy $D(p|q) = \sum_x p(x)\ln \frac{p(x)}{q(x)}$ is strictly decreasing and, furthermore, its derivative is strictly negative along the trajectory of the Kolmogorov forward equation $\dot p = pA$, where $A$ is the  infinitesimal matrix of the considered Markov chain. 
\end{lemma}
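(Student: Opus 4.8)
The plan is to differentiate $D(p(t)|q)$ along the flow $\dot p = pA$ and exhibit the derivative as a manifestly non-positive quantity that vanishes only at $p=q$. First I would compute
\begin{equation*}
\frac{d}{dt}D(p|q)=\sum_x \dot p_x\ln\frac{p_x}{q_x}+\sum_x\dot p_x,
\end{equation*}
and observe that the second sum vanishes because $A$ conserves total mass: $\sum_x\dot p_x=\sum_{x,y}p_y\alpha_{yx}=\sum_y p_y\sum_x\alpha_{yx}=0$, using that the rows of an infinitesimal matrix sum to zero. Setting $u_x=p_x/q_x$ (well defined since $q_x>0$) and substituting $\dot p_x=\sum_y p_y\alpha_{yx}=\sum_y q_y u_y\alpha_{yx}$ leaves
\begin{equation*}
\frac{d}{dt}D=\sum_{x,y}q_y u_y\alpha_{yx}\ln u_x.
\end{equation*}

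The key algebraic step is to fold this double sum into an off-diagonal form. Handling the diagonal through $\alpha_{xx}=-\sum_{y\neq x}\alpha_{xy}$ and relabelling indices, I expect the expression to collapse to
\begin{equation*}
\frac{d}{dt}D=\sum_{x\neq y}q_y\alpha_{yx}\,u_y\ln\frac{u_x}{u_y}.
\end{equation*}
Applying the elementary inequality $\ln t\leqslant t-1$ with $t=u_x/u_y$ bounds each summand by $q_y\alpha_{yx}(u_x-u_y)$; summing and using the row identity $\sum_x\alpha_{yx}=0$ together with stationarity $\sum_y q_y\alpha_{yx}=0$ (that is, $qA=0$) shows that $\sum_{x\neq y}q_y\alpha_{yx}(u_x-u_y)=0$. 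Hence $\frac{d}{dt}D\leqslant 0$.

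For the strict statement I would track the equality case of $\ln t\leqslant t-1$, which holds only when $t=1$. The derivative can therefore vanish only if $u_x=u_y$ for every ordered pair with $\alpha_{yx}>0$. Here the connectedness hypothesis is decisive: the directed graph on the state space carrying an edge $y\to x$ whenever $\alpha_{yx}>0$ is strongly connected, so $u$ must be constant on all of $X$; since $\sum_x p_x=\sum_x q_x=1$, a constant ratio can only be $1$, giving $p=q$. Thus the derivative is strictly negative whenever $p\neq q$, and $D(p|q)$ is strictly decreasing until it attains its minimum at the fixed point $q$.

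I expect the only real obstacle to be bookkeeping rather than analysis: carrying out the diagonal splitting so that the two zero-sum identities cancel exactly, and making the equality analysis of $\ln t\leqslant t-1$ interact cleanly with connectivity, in particular verifying that ``constant across every edge of a strongly connected graph'' genuinely forces a global constant. No further subtlety arises because $X$ is finite, so all sums are finite and the manipulations above are unconditionally valid.
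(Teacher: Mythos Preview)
Your proposal is correct and matches the paper's argument in all essentials: introduce the ratios $u_x=p_x/q_x$, express $\frac{d}{dt}D$ as an off-diagonal sum, control it by a log/convexity inequality, and use irreducibility for the equality case. The paper merely packages the last two steps differently, writing $\frac{dD}{dt}=-\sum_{x,y}\bigl(\tfrac{f_x}{f_y}\ln\tfrac{f_x}{f_y}-\tfrac{f_x}{f_y}+1\bigr)q_x\alpha_{xy}f_y$ so that non-positivity is immediate from $t\ln t - t + 1\geqslant 0$, which is exactly your inequality $\ln t\leqslant t-1$ in disguise.
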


\begin{proof}
(for the reader's convenience).

Let $p(t)$ be the solution of the Kolmogorov forward equation and denote  $\frac{p_x}{q_x}$ by $f_x$, then the derivative $\frac{d}{dt}D(p(t)|q)$ can be written as 
$$
\frac{dD}{dt}=-\sum_{x,y}\left(\frac{f_x}{f_y}\ln \frac{f_x}{f_y}-\frac{f_x}{f_y}+1\right)q_x\alpha_{xy}
f_y
$$
Obviously, after removing parentheses, the two last terms in this formula cancel out, but they are needed to prove monotonicity. The expressions in parentheses are non-negative and, as the Markov chain is connected, they can be simultaneously
equal to $0$ only if $f_x=f_y$ for all $x,y$, i.e. if the distributions $p$ and $q$ are the same. \qed

\end{proof}

We now collect the properties of the mutation and homologous-recombination processes described above.

1) For the recombination process on substring $I$
$$
H(\mu_\Lambda) = \sum_x \left(\ln \mu_\Lambda (x)\right) \mu_\Lambda (x)
$$
monotonically (maybe, non-strictly) decreases, so its time derivative is non-positive.

2) For the same process, the value $\sum_x (\ln q_\Lambda)\mu_\Lambda(x)$ does not change, because this logarithm is the sum of functions of $x_I$ and 
$x_{\Lambda\setminus I}$, and as shown above, the means of such functions remain constant.

3) Hence, the relative entropy
$$
D(\mu_\Lambda|q_\Lambda)=\sum_x \left(\ln \frac{\mu_\Lambda(x)}{q_\Lambda(x)}\right)\mu_\Lambda(x)
$$
also has a non-positive derivative.

4) For the mutation process, the relative entropy $D(\mu_\Lambda|q_\Lambda)$ has a strictly negative derivative.\\

The right-hand side of equation (\ref{equ}) consists of the terms for the recombination process on all substrings $I$, and for the
mutation process. Since the relative entropy has a non-positive derivative by equations for the recombination process and a strictly negative derivative for the mutation process, its derivative by equation (\ref{equ}) is strictly negative, if $\mu_\Lambda\neq q_\Lambda$. This means, that the relative entropy strictly decreases along the trajectory of equation (\ref{equ}) and this equation has a unique fixed point $q_\Lambda$. As noted above, fixed points of equation (\ref{equ}) correspond to different population structures. A unique fixed point $q_\Lambda$ depends only on the infinitesimal matrix for the mutation process, so it gives us a population without non-trivial structure; if $q_i$ do not depend on $i$, then the probability of a genome depends only on its nucleotide composition. Note that if the similarity function $\varphi(x_I,y_I)$ and  the constant $\varkappa$ depend on time, it would not affect the calculations above.

\section{Discussion}
\label{sec:2}
Our results are consistent with simulations in (Fraser et al. 2007) with one difference. When the recombination rate is low, mutations lead to an increase of variance in a mostly clonal population, otherwise clusters are mixed by recombination. However, in that setting the genetic drift may cause speciation by chance as in (Falush et al. 2006), if the recombination rates vary appreciably between members of the population. 

Here we do not examine the behavior of a system in time, so we cannot claim that clusters may not emerge transiently, but we demonstrate there is no force that could maintain them. The model is general, as it accommodates various types of dependence of the recombination rate on sequence similarity, e.g. log-linear (Vuli\'c et al. 1997). However, the symmetry of the function $\varphi$ is a strong restriction and it seems to be weakly applicable to natural populations. For example, in the case of conjugative plasmids, the probabilities of DNA transfer between $F^+$ and $F^-$ cells in different directions are not equal (Arutyunov and Frost 2013), and hence $F^+$ genomes may form clusters.

We have examined the possibility that homologous recombination could drive bacterial speciation and have demonstrated that it could not. The possibility that stable clusters could emerge only due to the recombination rates, dependence on sequences similarity, is directly rejected. The homologous recombination may affect the rate  of spectiation, but it could not be the reason of it by itself. 
Mechanisms such as environmental isolation or fitness landscape are probable causes of bacterial speciation. A significant role may be played by phages. For example, in \emph{E. coli}, transduction seems to be responsible for most of the recombination events, because in this species conjugation and transformation processes are ineffective (Dixit et al. 2014).

\section{Appendix}
\label{sec:3}
\textbf{Definition}
Define $I/I$-recombination as the transform of pair of genomes   $x= ( x_I, x_{\Lambda\setminus I} )$  and $y=(y_I, y_{\Lambda\setminus I})$   to the pair $\tilde{x}=(y_I, x_{\Lambda\setminus I})$  and $\tilde{y}= (x_I, y_{\Lambda\setminus I})$.\\

Consider a finite population of genomes with mutations defined as above and $I/I$-recombinations  of pairs of genomes. The $I/I$-recombination rate of the pair $x,y$  equals  to $\varkappa \varphi(x_I, y_I)$.  As before, we assume  the function $\varphi$ to be symmetric. Then, in the Kurtz limit (infinite size limit), the dynamics of this population is  governed by equation (\ref{equ}). 
Note that $I/I$-recombination differs from $I$-recombination considered above. $I$-recombination can be defined as a transform of a genome $x=(x_I, x_{\Lambda\setminus I})$  to the genome  $\tilde{x}=(y_I, x_{\Lambda\setminus I})$  without any change in the genome $y$. However, kinetic equation (\ref{equ}) is the same for both cases, but now we may consider equation (\ref{equ}) as the Boltzmann equation for ``molecules" which exchange ``the parts of their velocities (genomes)" due to random collisions ($I/I$-recombinations)  and have some random ``free motion" (mutations).
 It is known that for the Boltzmann  equation of this type, the Kullback-Leibler entropy (in fact, negative entropy) has a strictly negative derivative in time (Batishcheva and Vedenyapin 2005; Pitaevskii and Lifshic 1981). The derivative is zero only in the fixed point. This fixed point is the invariant distribution $q_\Lambda$  for the ``free motion", i.e. for the pure mutation process. The detailed balance condition for collisions (Malyshev et al. 2004; Malyshev and Pirogov 2008) is satisfied due to the symmetry of $\varphi$. So we again see that the Kullback-Leibler relative entropy is the Lyapunov function for system (\ref{equ}). And  so again any trajectory of (\ref{equ}) converges to the fixed point $q_\Lambda$.

\begin{acknowledgements}
This study was supported by a grant from the Russian Science
Foundation (14-24-00155).
\end{acknowledgements}


\end{document}